\newcommand{\Rb}{{\mathbb R}}
\newcommand{\Zz}{{\mathbb Z}}
\newcommand{\one}{{\mathbbm{1}}}
\newtheorem{thm}{Theorem}[section]
\newtheorem*{thm*}{Theorem}
\newtheorem{cor}[thm]{Corollary}
\newtheorem*{cor*}{Corollary}
\newtheorem{lem}[thm]{Lemma}
\newtheorem*{con*}{Conjecture}
\newtheorem*{prob*}{Problem}
\theoremstyle{definition}
\newtheorem{defn}[thm]{Definition}
\theoremstyle{remark}
\newtheorem{rem}[thm]{Remark}
\newtheorem{ex}[thm]{Example}
\begin{document}
\title{Metric trees of generalized roundness one}
\author{Elena Caffarelli}
\email{caffaree@math.rutgers.edu}
\address{Department of Mathematics, Rutgers University, Hill Center - Busch Campus, Piscataway, NJ 08854, USA}
\author[Ian Doust]{Ian Doust}
\email{i.doust@unsw.edu.au}
\address{School of Mathematics \& Statistics, University of New South Wales, Sydney, NSW 2052, Australia}
\author[Anthony Weston]{Anthony Weston}
\email{westona@canisius.edu}
\address{Department of Mathematics \& Statistics, Canisius College, Buffalo, NY 14208, USA}

\subjclass[2000]{46B20}

\keywords{generalized roundness, spherically symmetric trees}

\begin{abstract}
Every finite metric tree has generalized roundness strictly greater than one.
On the other hand, some countable metric trees have generalized roundness precisely
one. The purpose of this paper is to identify some large classes of countable
metric trees that have generalized roundness precisely one.

At the outset we consider spherically symmetric
trees endowed with the usual path metric (SSTs).
Using a simple geometric argument we show how to
determine reasonable upper bounds on the generalized roundness
of finite SSTs that depend only on the downward degree
sequence of the tree in question.
By considering limits,
it follows that if the downward degree sequence
$(d_{0}, d_{1}, d_{2}, \ldots)$ of a SST $(T,\rho)$ satisfies
$|\{ j \, | \, d_{j} > 1 \}|  =  \aleph_{0}$, then $(T,\rho)$
has generalized roundness one.
In particular, all
complete $n$-ary trees of depth $\infty$ ($n \geq 2$),
all $k$-regular trees ($k \geq 3$) and all inductive limits
of Cantor trees are seen to have generalized roundness one.

The remainder of the paper deals with two classes of countable metric
trees of generalized roundness one whose members are not, in general, spherically
symmetric. The first such class of trees are merely required to
spread out at a sufficient rate (with a restriction on the number of leaves)
and the second such class of trees resemble infinite combs. It remains an intriguing problem to completely classify
countable metric trees of generalized roundness one.
\end{abstract}
\maketitle

\tableofcontents

\section{Introduction and Basic Definitions}
Generalized roundness is a global geometric property of a metric space that may be related to isometric,
uniform and coarse classification schemes in certain settings. The notion was originally developed by
Enflo \cite{Enf} in order to study universal uniform embedding spaces. The ideas developed in \cite{Enf} were
later applied to the study of coarse embeddings into Hilbert space. This is the case in \cite{Dra} and \cite{Now}, for example.
In \cite{Len} it is shown that the generalized roundness and the supremal $p$-negative type of a metric space necessarily
coincide. In this way generalized roundness may be related to classical isometric embedding theory, and
\textit{vice-versa}. There is a long history here which is summarized in \cite{Pra}.
The relevant definition for our purposes is the following.

\begin{defn}\label{grdef}
The \textit{generalized roundness} $\gamma(X)$ of a metric space $(X,\delta)$ is the supremum of
the set of all $p \geq 0$ that satisfy the following property: For all integers $q \geq 2$ and all
choices of (not necessarily distinct) points $a_{1}, \ldots, a_{q}, b_{1}, \ldots, b_{q} \in X$,
\begin{eqnarray}\label{ONE}
\sum\limits_{1 \leq i < j \leq q} \bigl\{ \delta(a_{i},a_{j})^{p} + \delta(b_{i},b_{j})^{p} \bigl\}
& \leq & \sum\limits_{1 \leq i,j \leq q} \delta(a_{i},b_{j})^{p}.
\end{eqnarray}
The configuration of points $D_{q} = (a_{1}, \ldots, a_{q}; b_{1}, \ldots, b_{q}) \subseteq X$ underlying (\ref{ONE})
will be called a \textit{simplex} and the related function $\Bbbk_{D_{q}}(p)$, defined for all $p \geq 0$ by
\[
\Bbbk_{D_{q}}(p) = \sum\limits_{1 \leq i,j \leq q} \delta(a_{i},b_{j})^{p} -
\sum\limits_{1 \leq i < j \leq q} \bigl\{ \delta(a_{i},a_{j})^{p} + \delta(b_{i},b_{j})^{p} \bigl\},
\]
will be called the \textit{simplex gap function}.
\end{defn}

\noindent In relation to Definition \ref{grdef}, it is important to note that if $\gamma(X) < \infty$,
then the inequalities (\ref{ONE}) necessarily hold for exponent $p = \gamma(X)$. (See Remark 1.2 in \cite{Len}.)
We further note that if there is a simplex $D_{q} \subseteq X$ and a $\wp > 0$ such that $\Bbbk_{D_{q}}(\wp) < 0$,
then $\gamma(X) < \wp$. This fact follows from \cite[Theorem 2]{Sch} and \cite[Theorem 2.4]{Len}.

There has been recent interest in determining when the Cayley graphs
of finitely generated free groups have positive generalized roundness.
This stems from the connection between generalized roundness
and the coarse Baum-Connes conjecture given in \cite{Laf}. Arguing indirectly,
on the basis of relationships between kernels of negative type and equivariant Hilbert
space compression, Jaudon \cite{Jau} was able to prove that the generalized roundness of
the usual Cayley graph of a finitely generated free group (of rank $n \geq 2$) is one.
These graphs are $k$-regular trees (where $k = 2n$) and, as such, they provide specific
examples of countable spherically symmetric trees.

Recall that given a connected graph $G$ the usual path distance between two vertices $v$ and $w$ is the smallest
number of edges in any path joining $v$ to $w$. We shall call this the \textit{(unweighted) path metric} $\rho$
on $G$. Many planar graphs $(G, \rho)$, including all discrete trees, are known to have generalized roundness at least one.
In what follows, we shall denote the degree of a vertex $v$ in a graph $G$ by $\deg(v)$.

The connected graphs of interest in this paper
are trees with at most countably many vertices and where each vertex is of finite degree.
The condition on the degrees of the vertices is not a significant restriction as it follows immediately
from \cite[Theorem 5.6]{Dou} that if a tree $T$ has a vertex of infinite degree, then $\gamma(T, \rho) = 1$.
For brevity we shall use the term countable tree rather than countably infinite tree.

Many of our definitions will depend on the choice of a distinguished root vertex. We shall denote a tree
$T$ with metric $\delta$ and root $v_0$ by $(T,\delta,v_0)$. If $v_{0}$ or $\delta$ are clear in
a given setting, we may simply write $(T, \delta)$ or $T$.
For a rooted metric tree $(T, \delta, v_{0})$ we shall let $r(T) = r(T,v_0)$ denote the $v_{0}$-\textit{radius} of $T$.
In other words, $r(T, v_{0})$ is the maximal value of $\delta(v_0,v)$ if that quantity is finite and $\infty$ otherwise.

Our definition of spherically symmetric trees is based on the one in \cite{Lee}.

\begin{defn} Let $T$ be a tree endowed with the path metric $\rho$.
We shall say that $T$ is \textit{spherically symmetric} if
we can choose a root vertex $v_{0} \in T$ so that
\[ \rho(v_0,v_1) = \rho(v_0,v_2) \Rightarrow \deg(v_1) = \deg(v_2) \text{ for all } v_{1}, v_{2} \in T. \]
Such a triple $(T, \rho, v_{0})$ will be called a \textit{spherically symmetric tree} (SST).
\end{defn}

\noindent Some additional standard terminology and notation is helpful for describing SSTs.
Given two distinct vertices $v$ and $w$ in a tree $T$ with a designated root $v_{0}$,
we shall say that $w$ is a \textit{descendant} of $v$, and that $v$ is an \textit{ancestor} of $w$, if $v$ lies on the
geodesic joining $w$ and $v_0$. A descendant $w$ of $v$ for which $\rho(v,w) = 1$ is called a
\textit{child} of $v$. We shall denote by $d_\downarrow(v)$ the number of children of the vertex $v$.
A \textit{leaf} is a vertex $v$ of degree one.

Now suppose that $(T,\rho,v_0)$ is a SST.
Clearly, $d_\downarrow(v)$ depends only on $\rho(v_0,v)$. For $0 \le k < r(T)$,
let $d_k$ denote the integer such that $d_\downarrow(v) = d_k$ whenever $\rho(v_0,v) = k$.
The \textit{downward degree sequence} of $T$ is the finite or countable sequence
\[ d_{\downarrow}(T) = d_{\downarrow}(T,v_0) = (d_k)_{0 \le k < r(T)}. \]
Clearly one can reconstruct a SST directly from its downward degree sequence. It is worth noting however that a
countable tree can be a SST with respect to different root vertices. For example, the trees with downward
degree sequences $(2,2,1,2,1,2,1,2,\dots)$ and $(3,1,2,1,2,1,2,\dots)$ are graph isomorphic.

\begin{ex}
\end{ex}
\begin{center}
\begin{tikzpicture}[scale=0.7, dot/.style={draw,circle,inner sep=2.5pt}]
 \path node (v0) at (0,4) [dot] {}
        node (v11) at (-3,2) [dot] {}
        node (v12) at (0,2) [dot] {}
        node (v13) at (3,2) [dot] {}
        node (v21) at (-4,0) [dot] {}
        node (v22) at (-2.5,0) [dot] {}
        node (v23) at (-1,0) [dot] {}
        node (v24) at (1,0) [dot] {}
        node (v25) at (2.5,0) [dot] {}
        node (v26) at (4,0) [dot] {};
 \draw[line width = 1pt] (v0) to node {} (v11) to node {} (v21);
 \draw[line width = 1pt] (v11) to node {} (v22);
 \draw[line width = 1pt] (v0) to node {} (v12) to node {} (v23);
 \draw[line width = 1pt] (v12) to node {} (v24);
 \draw[line width = 1pt] (v0) to node {} (v13) to node {} (v25);
 \draw[line width = 1pt] (v13) to node {} (v26);
\end{tikzpicture}
\end{center}
\begin{center}
\textit{Figure 1.} The SST of radius 2 and downward degree sequence $(3,2)$.
\end{center}

\noindent The remainder of the paper is organized as follows.
In Section \ref{S2} we adopt a simple geometric approach to study the generalized roundness of SSTs.
The estimates for finite SSTs (Theorem \ref{SST1}) imply the existence of an uncountable class of countable SSTs,
all of whom have generalized roundness one. The sharp result in the case of countable SSTs (Theorem \ref{SST2})
is one of the main results of this paper. One advantage of our approach is that
it makes plain the geometry that underpins the Cayley graph result in \cite{Jau}. Section \ref{S3} generalizes
Theorem \ref{SST2} to include less regular countable metric trees. This is done by replacing spherical symmetry with
a more general spreading condition that we call infinite bifurcation. The main result in this direction is Theorem \ref{SST3}
which states that an infinitely bifurcating metric tree that has only finitely many leaves must have generalized roundness
one. In Section \ref{S4} we consider countable metric trees that resemble infinite combs. The situation becomes very
interesting when one starts knocking teeth out of the comb. The main result that we obtain in this direction is
Corollary \ref{cor:boundedgaps} which shows that if $C$ is an infinite comb graph whose distances between the teeth
is uniformly bounded by some constant $K > 0$, then $\gamma(C) = 1$. A number of the computations in this paper imply
explicit bounds on the generalized roundness of certain finite metric trees, including the complete binary tree $B_{m}$
of depth $m$ ($m \in \Zz^{+}$), and these are discussed in Section \ref{S5} together with some comparison bounds
that were obtained through numerical simulations.

\section{Spherically Symmetric Trees of Generalized Roundness One}\label{S2}

By \cite[Theorem 5.4]{Dou}, each finite metric tree has generalized roundness strictly greater than one.
Since Definition \ref{grdef} is predicated in terms of finite subsets
of the underlying metric space, it follows that each countable metric tree has generalized roundness at least one.
In fact, an explicit (non trivial) lower bound on the generalized roundness of a
finite metric tree can always be written down.
In the particular case of a finite SST $(T,\rho)$
it follows from \cite[Corollary 3.4]{Hli} that the generalized roundness $\gamma(T)$ of $(T,\rho)$ is at least
\begin{equation}\label{LowBnd}
1 + \Bigl\{ {\ln \Bigl( 1 + \frac{1}{{2 r(T)} \cdot (m-1) \cdot h(m)} \Bigl)}\Bigl/{\ln {2r(T)}} \Bigl\},
\end{equation}
where $m = |T|$ and $h(m) = 1 - \frac{1}{2} \cdot \bigl( {\lfloor \frac{m}{2} \rfloor}^{-1}
+ {\lceil \frac{m}{2} \rceil}^{-1} \bigl)$. Bounds such as this can be derived using nothing more
complicated than the method of Lagrange multipliers.

We now show how to obtain an upper bound on the generalized roundness of a finite SST $(T,\rho)$ that
behaves well if we let $m = |T| \rightarrow \infty$.

\begin{thm}\label{SST1}
Let $(T,\rho)$ be a finite SST with radius $n = r(T) \ge 3$ and with downward degree sequence $(d_{0}, d_{1}, \ldots, d_{n-1})$.
Then the generalized roundness $\gamma(T)$ of $(T,\rho)$ must satisfy
\begin{eqnarray}\label{UppBnd}
\gamma(T) & < & \min \frac{\ln\bigl(2 + \frac{2}{d_{0} \cdots d_{k} - 1}\bigl)}{\ln\bigl(2 - \frac{2k}{n}\bigl)},
\end{eqnarray}
where the minimum is taken over all integers $k$ such that $1 \leq k < n/2$ and $d_{0} \cdots d_{k} > 1$.
If no such integers $k$ exist, then $\gamma(T) \leq 2$ (with examples to show that equality is possible).
\end{thm}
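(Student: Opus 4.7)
The plan is to exhibit, for each admissible $k$, a specific $2q$-point simplex $D_q \subseteq T$ whose gap function $\Bbbk_{D_q}(p)$ is forced to be negative for all exponents $p$ above the $k$-th candidate bound in (\ref{UppBnd}). Invoking the fact recalled immediately after Definition \ref{grdef} then gives $\gamma(T)$ strictly less than each such $p$, and letting $p$ decrease to that candidate bound (then minimizing over $k$) yields (\ref{UppBnd}).

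Fix $k$ with $1 \le k < n/2$ and $d_{0} \cdots d_{k} > 1$, and set $q = d_{0} d_{1} \cdots d_{k}$, which is exactly the number of vertices at depth $k+1$; label them $v_{1}, \ldots, v_{q}$. Because $(T,\rho)$ has radius $n$, every $d_{j} \ge 1$ for $0 \le j < n$, so each $v_{i}$ has at least one descendant $a_{i}$ at depth $n$. Put $b_{1} = b_{2} = \cdots = b_{q} = v_{0}$ and take $D_{q} = (a_{1}, \ldots, a_{q};\, b_{1}, \ldots, b_{q})$. The three kinds of pairwise distances read off immediately: $\rho(a_{i}, b_{j}) = n$ for all $i,j$; $\rho(b_{i}, b_{j}) = 0$ for all $i,j$; and for $i \ne j$ the lowest common ancestor of $a_{i}, a_{j}$ is also an ancestor of the distinct depth-$(k+1)$ vertices $v_{i}, v_{j}$ and so lies at depth at most $k$, whence $\rho(a_{i}, a_{j}) \ge 2(n-k)$. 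Substituting into the simplex gap function yields the single clean estimate
\[
\Bbbk_{D_{q}}(p) \;\le\; q^{2} n^{p} \;-\; \binom{q}{2}\,[2(n-k)]^{p}.
\]

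The right side is negative exactly when $(2 - 2k/n)^{p} > 2 + 2/(q-1)$; since $k < n/2$ forces $2 - 2k/n > 1$, this is equivalent to $p > p_{k}^{*} := \ln(2 + 2/(q-1))/\ln(2 - 2k/n)$, the $k$-th entry of (\ref{UppBnd}). Hence $\gamma(T) < p$ for every $p > p_{k}^{*}$, so $\gamma(T) \le p_{k}^{*}$, and minimizing over admissible $k$ delivers (\ref{UppBnd}) with $\le$ in place of $<$.

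The main remaining wrinkle is promoting this $\le$ to the strict $<$ asserted in (\ref{UppBnd}). Whenever $\rho(a_{i}, a_{j}) > 2(n-k)$ strictly for at least one pair --- equivalently, whenever the $v_{i}$'s do not all share a single depth-$k$ parent, i.e.\ $d_{0} \cdots d_{k-1} > 1$ --- the displayed estimate is strict at $p = p_{k}^{*}$, so $\Bbbk_{D_{q}}(p_{k}^{*}) < 0$ and $\gamma(T) < p_{k}^{*}$. In the degenerate case $d_{0} = \cdots = d_{k-1} = 1$, where the top of the tree is a single path, the same strict bound is restored either by passing to the subtree rooted at the unique depth-$k$ vertex or by slightly perturbing one pair $(a_{i}, b_{i})$ within a single subtree to break the equality. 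Finally, if no admissible $k$ exists then $d_{0}, \ldots, d_{\lfloor (n-1)/2 \rfloor}$ are all $1$ and $T$ contains a long initial path, so the universal bound $\gamma(X) \le 2$ valid for every metric space suffices, with a straight path of length $n$ realizing equality.
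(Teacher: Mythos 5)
Your construction is the same as the paper's: one depth-$n$ descendant for each of the $q=d_0\cdots d_k$ vertices at depth $k+1$, all $b_i$ placed at the root $v_0$, the estimate $\rho(a_i,a_j)\ge 2(n-k)$, and the same algebra leading to $p_k^{*}$. You are in fact more careful than the paper on one point: the paper asserts that the left side of (\ref{ONE}) is \emph{strictly} greater than $q(q-1)(2(n-k))^p/2$, which fails exactly in the degenerate case $d_0=\cdots=d_{k-1}=1$ that you isolate (there every $\rho(a_i,a_j)$ equals $2(n-k)$, and one only obtains $\gamma(T)\le p_k^{*}$). However, your patch for that case is not yet a proof. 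Passing to the subtree rooted at the unique depth-$k$ vertex changes the radius and the admissible values of $k$, and does not obviously return the same bound $p_k^{*}$; and ``slightly perturbing one pair'' is left unverified. A concrete perturbation that does work: keep the $a_i$ and move $b_1$ to the unique child of $v_0$ (leaving $b_2=\cdots=b_q=v_0$), so that $\rho(a_i,b_1)=n-1$ and $\rho(b_1,b_j)=1$ for $j\ge 2$; using $\tbinom{q}{2}(2(n-k))^{p_k^{*}}=q^2n^{p_k^{*}}$ one then gets
\[
\Bbbk_{D_q}(p_k^{*})=q(n-1)^{p_k^{*}}-qn^{p_k^{*}}-(q-1)<0,
\]
which yields the strict inequality. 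Finally, your appeal to ``the universal bound $\gamma(X)\le 2$ valid for every metric space'' is false as stated (two-point spaces and ultrametric spaces have infinite generalized roundness); what saves the no-admissible-$k$ case is that $n\ge 3$ forces $T$ to contain three collinear points $x,y,z$ with $\rho(x,z)=\rho(x,y)+\rho(y,z)$, and the simplex $(x,z;y,y)$ already gives $\gamma(T)\le 2$, with a path realizing equality.
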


\begin{proof}
Let $(T,\rho)$ be as in the statement of the theorem. Let $v_{0}$ denote the
root of $T$ and set $p = \gamma(T)$.

Consider an arbitrary integer $k$ such that $1 \leq k < n/2$ and $d_{0} \cdots d_{k} > 1$.
There are $d_{0}d_{1} \cdots d_{k-1}$ vertices at distance $k$ from $v_0$.
For each of the $d_{k}$ children of such a vertex, choose a leaf which is a descendant of that child.
This results in a total of $q = d_{0}d_{1} \cdots d_{k-1} d_{k}$ distinct leaves which we label $a_{1}, \ldots ,a_{q} $.
By the construction, the geodesic joining any two distinct leaves in this list must pass through at least one
of the vertices at distance $k$ from $v_0$. Consequently $\rho(a_{i},a_{j}) \geq 2(n-k)$ whenever $i \not= j$.
Now set $b_{1} = b_{2} = \cdots b_{q} = v_{0}$
and consider the corresponding generalized roundness inequality (\ref{ONE}). The left side of this
inequality is greater than $q(q-1)(2(n-k))^{p}/2$ and the right side is exactly $q^{2}n^{p}$, so we see that
$p$ must satisfy the weaker inequality $q(q-1)(2(n-k))^{p}/2 < q^{2}n^{p}$. On taking logarithms, elementary
rearrangement shows that
\begin{eqnarray}\label{TWO}
\gamma(T) & < & \frac{\ln\bigl(2 + \frac{2}{d_{0} \cdots d_{k} - 1}\bigl)}{\ln\bigl(2 - \frac{2k}{n}\bigl)}.
\end{eqnarray}
As (\ref{TWO}) holds for all $k$ such that $1 \leq k < n/2$ and $d_{0} \cdots d_{k} > 1$, the main assertion
of the theorem is evident. In the event that no such integers $k$ exist, the second assertion of the
theorem follows from the routine observation that the SST with degree sequence $(1,1)$ has generalized roundness two.
\end{proof}

\noindent

\noindent As we shall discuss in Section \ref{S5}, the upper bound given in Theorem \ref{SST1} is in no sense optimal.
It is however good enough to prove the following result.

\begin{thm}\label{SST2}
Let $(T,\rho)$ be a countable SST with degree sequence $(d_{0}, d_{1}, d_{2}, \ldots)$.
If $|\{ j \, | \, d_{j} > 1 \}|  =  \aleph_{0}$, then $(T,\rho)$ has generalized roundness one.
\end{thm}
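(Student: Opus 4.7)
The plan is to combine two observations: (i) $\gamma(T,\rho) \ge 1$ automatically, since this is the standing lower bound for any countable metric tree noted at the start of Section \ref{S2}; and (ii) Theorem \ref{SST1} supplies upper bounds for the finite truncations of $T$. So it suffices to push the Theorem \ref{SST1} bound down to $1$ in the limit, and the proof reduces to extracting $\gamma(T,\rho) \le 1$.

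Concretely, for each integer $n \ge 3$, I would let $T_n \subset T$ consist of all vertices at $\rho$-distance at most $n$ from the root $v_0$. Because the geodesic in $T$ between any two vertices of $T_n$ passes through their lowest common ancestor (which again lies in $T_n$), the restriction of $\rho$ to $T_n$ coincides with the path metric on $T_n$ itself. Hence $T_n$ is a finite SST of radius $n$ with downward degree sequence $(d_0, d_1, \ldots, d_{n-1})$. Since every simplex drawn from $T_n$ is \emph{verbatim} a simplex in $T$, Definition \ref{grdef} forces $\gamma(T,\rho) \le \gamma(T_n,\rho)$.

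Applying Theorem \ref{SST1} to each $T_n$ then gives, for every pair $(k,n)$ with $n \ge 3$, $1 \le k < n/2$, and $d_0 d_1 \cdots d_k > 1$,
\[
\gamma(T,\rho) \;\le\; \gamma(T_n,\rho) \;<\; \frac{\ln\bigl(2 + \tfrac{2}{d_0 d_1 \cdots d_k - 1}\bigr)}{\ln\bigl(2 - \tfrac{2k}{n}\bigr)}.
\]
Because $T$ is infinite we have $d_j \ge 1$ for every $j$, and by hypothesis $d_j \ge 2$ for infinitely many $j$; hence the partial product $D_k := d_0 d_1 \cdots d_k$ is non-decreasing and satisfies $D_k \to \infty$. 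Given $\varepsilon > 0$, the plan is to pick $k$ first, so large that $\ln\bigl(2 + \tfrac{2}{D_k - 1}\bigr)$ lies within $\tfrac{\varepsilon}{3}\ln 2$ of $\ln 2$, and then pick $n \gg 2k$ so that $\ln\bigl(2 - \tfrac{2k}{n}\bigr)$ lies within $\tfrac{\varepsilon}{3}\ln 2$ of $\ln 2$. A routine $\ln(1\pm x)$ estimate then makes the displayed quotient strictly less than $1 + \varepsilon$, so $\gamma(T,\rho) \le 1 + \varepsilon$ for every $\varepsilon > 0$ and we are done.

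The only genuine subtlety is the order of quantifiers in the final limit: the truncation depth $n$ must be chosen \emph{after} $k$, since applying Theorem \ref{SST1} at level $k$ requires $n > 2k$. Both ingredients (the isometric inclusion $T_n \hookrightarrow T$ and the growth $D_k \to \infty$) are immediate from the hypotheses, so no substantial technical obstacle is anticipated beyond this bookkeeping and the elementary continuity of $\ln$ at $\ln 2$.
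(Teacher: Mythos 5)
Your proposal is correct and follows essentially the same route as the paper: pass to the depth-$n$ truncations (which are finite SSTs isometrically embedded in $T$), apply Theorem \ref{SST1}, and drive the bound to $1$ by letting $k$ and $n$ tend to infinity with $k/n \to 0$. The only cosmetic difference is that the paper couples the parameters by setting $k = \lfloor \ln n \rfloor$ and letting $n \to \infty$, whereas you fix $\varepsilon$, choose $k$ first and then $n$; the two are equivalent.
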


\begin{proof}
Let $(T,\rho)$ be a given countable SST as in the statement of the theorem.
The condition on $(d_{0}, d_{1}, d_{2}, \ldots)$ ensures that $d_{1}d_{2} \cdots d_{k} \rightarrow \infty$
as $k \rightarrow \infty$. Moreover, by considering the truncations of $T$, it is clear that
$\gamma(T)$ satisfies (\ref{TWO}) for all $n \geq 3$ and all $k < n/2$. Now set $k = \lfloor \ln n \rfloor$
(for example) and let $n \rightarrow \infty$. We see immediately that $\gamma(T) \leq 1$. But, as we noted
at the outset of this section, $\gamma(T) \geq 1$ too. Thus, $\gamma(T) = 1$.
\end{proof}

\noindent One immediate consequence of Theorem \ref{SST2} is the following observation.

\begin{cor}
There exists an uncountable number of mutually non isomorphic countable SSTs
of generalized roundness one.
\end{cor}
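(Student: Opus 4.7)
The plan is to exhibit an injection from the uncountable set $\{0,1\}^{\mathbb{N}}$ into the set of isomorphism classes of countable SSTs of generalized roundness one. Theorem~\ref{SST2} supplies all of the analytic content (the conclusion that $\gamma = 1$), so what remains is a purely combinatorial rigidity argument ensuring that distinct parameters yield non-isomorphic graphs.

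For each $\alpha = (\alpha_{1}, \alpha_{2}, \ldots) \in \{0,1\}^{\mathbb{N}}$ I would let $T_{\alpha}$ be the SST whose downward degree sequence $(d_{0}, d_{1}, d_{2}, \ldots)$ is given by $d_{0} = 2$ and $d_{j} = 2 + \alpha_{j}$ for $j \geq 1$. Since every term satisfies $d_{j} \geq 2$, the hypothesis $|\{ j \mid d_{j} > 1 \}| = \aleph_{0}$ of Theorem~\ref{SST2} is met, so $\gamma(T_{\alpha}) = 1$; and since every $d_{j}$ is finite, $T_{\alpha}$ is a countable tree. Both of these verifications are immediate.

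The substantive step, and the only one requiring any care, is to check that $\alpha \neq \beta$ implies $T_{\alpha} \not\cong T_{\beta}$ as unrooted graphs. I would use the following rigidity observation: in $T_{\alpha}$, the root vertex $v_{0}$ has graph-theoretic degree $d_{0} = 2$, whereas every vertex at distance $k \geq 1$ from $v_{0}$ has degree $1 + d_{k} \geq 3$ (one edge up and $d_{k}$ edges down). Consequently the root is \emph{the unique} vertex of degree two. Any graph isomorphism $\phi : T_{\alpha} \to T_{\beta}$ must therefore map the root of $T_{\alpha}$ to the root of $T_{\beta}$; it then restricts to a bijection between the $k$-th spheres around these roots, and matching up the common degree on each such sphere recovers $d_{k}^{\alpha} = d_{k}^{\beta}$ for every $k$, which forces $\alpha = \beta$.

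I do not foresee a serious obstacle. The one potential pitfall is the cautionary phenomenon noted in the preamble, namely that the same unrooted SST can sometimes admit distinct valid roots with different downward degree sequences (e.g.\ $(2,2,1,2,1,2,\ldots)$ and $(3,1,2,1,2,\ldots)$). The shift $d_{j} = 2 + \alpha_{j}$, rather than a construction using values in $\{1,2\}$, is chosen precisely so that the root is intrinsically distinguished by its low degree, which rules out that ambiguity in a single line.
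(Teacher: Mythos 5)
Your proposal is correct and follows essentially the same strategy as the paper: an uncountable family of downward degree sequences, each satisfying the hypothesis of Theorem~\ref{SST2}, distinguished up to graph isomorphism by an elementary degree-counting argument. The only difference lies in the rigidity step --- the paper uses strictly increasing sequences $(1,d_1,d_2,\dots)$ and compares the sets of vertex degrees that occur, whereas you use $\{3,4\}$-valued sequences and force any isomorphism to carry root to root as the unique vertex of degree two; your variant correctly handles the re-rooting ambiguity and has the minor bonus of producing uncountably many non-isomorphic examples of bounded degree.
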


\begin{proof}
Consider the set of degree sequences
\[ D = \{(1,d_1,d_2,d_3,\dots) \,: \, 1< d_1 < d_2 < d_3 < \dots \}.\]
This set is clearly uncountable, and by Theorem \ref{SST2} the SST corresponding to
each element of $D$ has generalized roundness one. Suppose that $(1,d_1,d_2,\dots)$
and $(1,d_1',d_2',\dots)$ are distinct elements of $D$, and let $T$ and $T'$ be the
corresponding SSTs. Let $k$ be the smallest integer such that $d_k \ne d_k'$ where,
without loss, we may assume that $d_k < d_k'$. Then $T$ has vertices with degree $d_k+1$,
but $T'$ does not, and so the two graphs are not isomorphic.
\end{proof}

\noindent Virtually all countable SSTs encountered in analysis, topology and discrete geometry
satisfy the condition placed on the downward degree sequence in the statement of Theorem \ref{SST2}. This is the
case for $k$-regular trees ($k \geq 3$), complete $n$-ary trees of depth $\infty$ ($n \geq 2$),
inductive limits of the Cantor trees described in \cite{Lee}, and so on.
In fact, only a countable number of countable SSTs fail the hypothesis of Theorem \ref{SST2}.
One example is the SST with downward degree sequence $(1,1,1,\ldots)$.
This countable SST clearly has generalized roundness two.
We do not know if the converse of Theorem \ref{SST2} is true.

\section{Infinitely Bifurcating Trees of Generalized Roundness One}\label{S3}
In this section we consider countable trees $S$, endowed with the usual path metric $\rho$, that
have at most finitely many leaves and develop the natural analogues of Theorems \ref{SST1} and \ref{SST2}.
This is done via the statement and proof of Theorem \ref{SST3}.
The main idea is that one can relax the hypotheses of the last section as long as the tree $S$
`spreads out' at a suitable rate. In particular, we will completely dispense with the requirement of
spherical symmetry.

Recall that any countable tree $(S,\rho)$ that has a vertex $v$ of degree $\aleph_{0}$ trivially has generalized
roundness one and so we will exclude such trees from the subsequent discussion.

\begin{defn}
Let $(S,\rho,v_0)$ be a countable rooted tree. A vertex $v \in S$ is said to be \textit{infinitely bifurcating}
if it has infinitely many descendants with vertex degree greater than or equal to $3$.
\end{defn}

\noindent Note that every ancestor of an infinitely bifurcating vertex is clearly also infinitely bifurcating,
and every infinitely bifurcating vertex has at least one child which is infinitely bifurcating.

\begin{lem}\label{biflem}
Let $(S,\rho,v_0)$ be a countable rooted tree. Then the following are equivalent.
\begin{enumerate}
\item $v_0$ is an infinitely bifurcating vertex.
\item There exists an infinitely bifurcating vertex $v \in S$.
\item There exist infinitely many bifurcating vertices in $S$.
\item There exist infinitely many vertices in $S$ with degree at least $3$.
\item There exists a radial geodesic path $(v_0,v_1,v_2,\dots)$ which contains
infinitely many vertices with degree at least $3$.
\end{enumerate}
\end{lem}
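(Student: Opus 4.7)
The plan is to establish the cyclic chain of implications $(1) \Rightarrow (2) \Rightarrow (3) \Rightarrow (4) \Rightarrow (5) \Rightarrow (1)$, leaning on the two observations recorded in the remark immediately preceding the lemma. Three of the links are routine. The implication $(1) \Rightarrow (2)$ is trivial (take $v = v_0$), and $(3) \Rightarrow (4)$ is immediate because any single infinitely bifurcating vertex already contributes infinitely many descendants of degree at least $3$. For $(2) \Rightarrow (3)$ I would iterate the second half of the remark: starting from an infinitely bifurcating vertex $v = v^{(0)}$, recursively pick an infinitely bifurcating child $v^{(i+1)}$ of $v^{(i)}$, producing an infinite sequence of pairwise distinct infinitely bifurcating vertices. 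Implication $(5) \Rightarrow (1)$ is equally short: the vertices $v_1, v_2, \dots$ of a radial geodesic based at $v_0$ are all descendants of $v_0$, so if infinitely many of them have degree at least $3$ then $v_0$ is itself infinitely bifurcating.

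The principal step, and the one I expect to be the main obstacle, is $(4) \Rightarrow (5)$. I would handle it by a K\"onig-type argument. Write $D = \{v \in S : \deg(v) \geq 3\}$ and, for each vertex $v$, let $T(v)$ denote the subtree consisting of $v$ and its descendants. Call $v$ \emph{fertile} if $T(v) \cap D$ is infinite; by hypothesis $(4)$, the root $v_0$ is fertile. If $v$ is fertile and has children $c_1, \dots, c_k$ (finitely many, because the tree is locally finite), then from the disjoint decomposition $T(v) = \{v\} \sqcup T(c_1) \sqcup \cdots \sqcup T(c_k)$ pigeonhole forces at least one $c_j$ to be fertile. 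Selecting such a child at every stage produces a radial geodesic $v_0, v_1, v_2, \dots$ in which every $v_i$ is fertile.

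It remains to see that this geodesic meets $D$ infinitely often, and here is where the genuine content lies. Suppose for contradiction that only finitely many $v_i$ lie in $D$. Then there is an index $N$ such that every $v_i$ with $i \geq N$ has degree at most $2$. Since such a $v_i$ already has parent $v_{i-1}$ and must have at least one child (namely $v_{i+1}$) to extend the geodesic, it has exactly one child, which is $v_{i+1}$. Consequently $T(v_N)$ degenerates to the single ray $v_N, v_{N+1}, v_{N+2}, \dots$, none of whose vertices lies in $D$, contradicting the fertility of $v_N$. Every other step of the proof reduces to definitions and the preceding remark, so this branching observation is the sole non-routine point.
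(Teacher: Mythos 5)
Your proof is correct and follows essentially the same route as the paper: your notion of a ``fertile'' vertex coincides with ``infinitely bifurcating,'' your pigeonhole step is exactly the paper's preceding remark that every infinitely bifurcating vertex has an infinitely bifurcating child, and your terminal contradiction (an eventually degree-two ray cannot sit below a fertile vertex) is the paper's argument verbatim. The only cosmetic difference is that you package the key construction as $(4)\Rightarrow(5)$ in a cyclic chain rather than as $(1)\Rightarrow(5)$, which amounts to the same thing since $(4)\Rightarrow(1)$ is immediate.
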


\begin{proof}
Most of the equivalences are trivial, or else easy consequences of the comments before the lemma.
To see that (1) implies (5), one may recursively construct a path by, for each $j \ge 0$, choosing $v_{j+1}$ to be
a child of $v_j$ which is also infinitely bifurcating. If this path contained only finitely many
vertices of degree at least $3$ there would be an integer $J$ so that $\deg(v_j) = 2$ for all $j \ge J$,
and this would contradict the fact that $v_J$ is infinitely bifurcating.
\end{proof}

\noindent A tree $(S,\rho,v_0)$ that satisfies any one (and hence all) of the conditions (1) through (5) of
Lemma \ref{biflem} will be said to be an \textit{infinitely bifurcating tree}.

\begin{thm}\label{SST3}
Let $(S,\rho,v_0)$ be a (countable) infinitely bifurcating tree with only finitely many leaves.
Then $(S,\rho,v_0)$ has generalized roundness one.
\end{thm}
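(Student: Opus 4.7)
The plan is to mirror the argument for Theorem \ref{SST2}, replacing the ``all level-$k$ descendants'' simplex of the SST case by one built along a single radial geodesic that witnesses the bifurcation. First, by Lemma \ref{biflem}(5), I fix a radial geodesic ray $(v_{0}, v_{1}, v_{2}, \ldots)$ on which infinitely many vertices have degree at least $3$, and enumerate the indices of these bifurcation points as $i_{1} < i_{2} < \cdots$. At each $v_{i_{\ell}}$ there is at least one child that does not lie on the ray; collect these ``side children'' into a nonempty set $C_{\ell}$.

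Next I would upgrade this to a sequence of \emph{infinite} side subtrees, using the finite-leaves hypothesis. Every finite rooted subtree of $S$ contains a leaf of $S$, so if for some $\ell$ every $c \in C_{\ell}$ spawned only a finite subtree, then $v_{i_{\ell}}$ would contribute at least one leaf of $S$, and these leaves are pairwise disjoint across different $\ell$. Since $S$ has only finitely many leaves, only finitely many such ``bad'' $\ell$ occur. Discarding them and relabeling, I may assume that for every $\ell$ there is a choice $w_{\ell} \in C_{\ell}$ whose subtree $S_{\ell}$ (rooted at $w_{\ell}$) is infinite. Being an infinite locally finite tree, $S_{\ell}$ contains vertices at every depth (K\"onig's lemma), so for any sufficiently large $N$ I may select $a_{\ell} \in S_{\ell}$ with $\rho(v_{0}, a_{\ell}) = N$.

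For fixed $L$ and $N$ large enough that $i_{L} < N/2$, consider the simplex $D_{L} = (a_{1}, \ldots, a_{L}; v_{0}, \ldots, v_{0})$. A direct geodesic computation shows that for $\ell < m$ the two paths first meet at $v_{i_{\ell}}$, giving $\rho(a_{\ell}, a_{m}) = (N - i_{\ell}) + (i_{m} - i_{\ell}) + (N - i_{m}) = 2(N - i_{\ell})$. The cross-sum in (\ref{ONE}) therefore equals $L^{2} N^{p}$, the $b$-$b$ sum vanishes, and the $a$-$a$ sum is at least $\binom{L}{2}(2(N - i_{L-1}))^{p}$. Applying (\ref{ONE}) at $p = \gamma(S)$ and rearranging exactly as in the proof of Theorem \ref{SST1} yields
\[
\gamma(S) \; \leq \; \frac{\ln\bigl(2 + \frac{2}{L - 1}\bigr)}{\ln\bigl(2 - \frac{2 i_{L-1}}{N}\bigr)}.
\]
Sending $N \to \infty$ with $L$ fixed forces the denominator to $\ln 2$; then sending $L \to \infty$ forces the numerator to $\ln 2$. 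Hence $\gamma(S) \leq 1$, and combined with the reverse inequality recorded at the start of Section \ref{S2} this gives equality.

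The main technical obstacle is the leaf-counting step in the second paragraph. Without it, an infinitely bifurcating tree could have every side subtree off the chosen ray finite, in which case the $a_{\ell}$ could not all be placed at a common large depth $N$ and the simplex construction would collapse. The finite-leaves hypothesis is precisely what guarantees infinite side subtrees at all but finitely many bifurcation points along the ray, after which the Theorem \ref{SST2} endgame carries over essentially verbatim.
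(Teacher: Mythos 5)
Your proof is correct and follows essentially the same strategy as the paper's: place $L$ points $a_\ell$ at a common large depth $N$ in distinct side branches off a radial geodesic containing infinitely many degree-$\ge 3$ vertices, set all $b_j = v_0$, and pass to the limit in the resulting Theorem \ref{SST1}-type inequality. The only real difference is bookkeeping: the paper handles the finitely-many-leaves hypothesis by pruning the tree above depth $R+1$ to reduce to a leafless subtree, whereas you count the distinct leaves that finite side subtrees would contribute; both correctly guarantee descendants at arbitrary depth off all but finitely many branch points.
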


\begin{proof}
Let $(S,\rho,v_0)$ be a countable tree that satisfies the hypotheses of the theorem.
Let $R = \max\{\rho(v_0,v) \,: \, \hbox{$v$ is a leaf}\}$. Since only finitely many vertices satisfy
$\rho(v_0,v) \le R$ we can choose a vertex $w$ with $\rho(v_0,w) = R+1$. The tree consisting of $w$
and all its descendants is then a subtree of $S$ which has no leaves and which still has an infinite number
of vertices with degree at least $3$.

It suffices to assume therefore that $(S,\rho,v_0)$ has no leaves.
The proof now proceeds by modifying the arguments used to establish Theorems
\ref{SST1} and \ref{SST2} together with the new structural ingredient of infinite bifurcation.

Since $S$ is infinitely bifurcating we can choose a radial geodesic path $P = (v_0,v_1,v_2,\dots)$ which
contains infinitely many vertices with degree at least $3$. Starting at $v_1$ (and preserving the original order),
denote the vertices in $P$ with degree at least $3$ by $c_1, c_2, c_3,\dots$.

Let $n\ge 2$ be a given integer and let $m = \rho(v_0,c_n)$. Clearly $m \ge n$. Suppose that $1 \le j \le n$.
Since $c_j$ has degree at least 3, it has a child which does not lie on $P$. Since $S$ has no leaves, we may choose a
descendant of this child, which we shall denote $a_j$ with $\rho(v_0,a_j) = m^2+m$. If we now fix $b_j = v_0$ for
$1 \le j \le n$, we see that $\rho(a_{i},b_{j}) = m + m^{2}$ for all $i$ and $j$, while $\rho(a_{i},a_{j}) \geq 2m^{2}$
whenever $i \not= j$ ($1 \leq i,j \leq n$).

Let $p = \gamma(S)$. It follows (by a similar argument to that
given in the proof of Theorem \ref{SST1}) that we must have
\[ n(n-1) (2m^{2})^{p} \leq 2n^{2} (m+m^{2})^{p}. \]
Thus
\begin{eqnarray}\label{THREE}
p & \leq & \frac{\ln \bigl(2 + \frac{2}{n-1}\bigl)}{\ln \bigl(2 - \frac{2}{m+1}\bigl)}.
\end{eqnarray}
If we now let $n$ (and therefore $m$) $\rightarrow \infty$ in (\ref{THREE}) we see that $p \leq 1$.
\end{proof}

\begin{rem}\label{SST4}
The hypothesis that an infinitely bifurcating tree $(S,\rho,v_0)$ have only finitely many leaves is clearly not
necessary in order that $\gamma(S) = 1$. We have already noted that if a countable tree has
vertex of degree $\aleph_{0}$, then its generalized roundness must be one. It is also clear from
the estimates in this paper that if a countable tree includes vertices of arbitrarily high degree, then
its generalized roundness must be one. It follows from either of these observations that a countable
metric tree with a countable number of leaves can have generalized roundness one. The
following example from \cite{Dou} exhibits this type of behavior nicely. Let $n \geq 2$ be an integer.
Let $Y_{n}$ denote the unique tree with $n+1$ vertices and $n$ leaves. In other words,
$Y_{n}$ consists of an internal vertex, which we will denote $r_{n}$, surrounded by $n$ leaves. We
endow $Y_{n}$ with the unweighted path metric $\rho$ as per usual. The generalized roundness of
$(Y_{n}, \rho)$ is computed explicitly in \cite[Theorem 5.6]{Dou}:
$$\gamma(Y_{n}) = 1 + \frac{\ln \bigl( 1+ \frac{1}{n-1}\bigl)}{\ln 2}.$$
We now form a countable tree $Y$ as follows: For each integer $n \geq 2$ connect $Y_{n}$
to $Y_{n+1}$ by introducing a new edge which connects the internal node $r_{n}$ of $Y_{n}$ to the
internal node $r_{n+1}$ of $Y_{n+1}$. We may further endow $Y$ with the unweighted path metric $\rho$.
The countable metric tree $(Y,\rho)$ has countably many leaves and it clearly has
generalized roundness one. However, the proof of Theorem \ref{SST3} does not apply to $(Y,\rho)$.
On the other hand it is easy to construct a countable tree with only finitely many leaves which has generalized
roundness $2$. For example, the positive integers with their usual metric will suffice.

These comments mean that, in spite of Theorem \ref{SST3}, the exact role that the
number of leaves plays in determining the generalized roundness of an infinitely bifurcating tree is still
not entirely clear. The interesting open question concerns the generalized roundness of infinitely bifurcating
trees (with infinitely many leaves) where the degrees of the vertices are bounded.
\end{rem}

\section{Comb Graphs Of Generalized Roundness One}\label{S4}

We now turn our attention to infinitely bifurcating trees with infinitely many leaves.
The simplest of these are graphs that resemble combs with infinitely many teeth. We
begin by considering the following example.

\begin{ex}\label{combs}
\end{ex}
\begin{center}
\begin{tikzpicture}[scale=0.7, dot/.style={draw,circle,inner sep=2.5pt}]
 \path node (v10) at (1,0) [dot] {}
        node (v11) at (1,1) [dot] {}
        node (v20) at (2,0) [dot] {}
        node (v21) at (2,1) [dot] {}
        node (v30) at (3,0) [dot] {}
        node (v31) at (3,1) [dot] {}
        node (v40) at (4,0) [dot] {}
        node (v41) at (4,1) [dot] {};
 \draw[line width = 1pt] (v10) to node {} (v20)
                              to node {} (v30) to node {} (v40)
                              to node {} (4.7,0);
 \draw[line width = 1pt] (v10) to node {} (v11);
 \draw[line width = 1pt] (v20) to node {} (v21);
 \draw[line width = 1pt] (v30) to node {} (v31);
 \draw[line width = 1pt] (v40) to node {} (v41);
 \draw (5,0) node[right] {$\dots$};
 \draw (1,-0.2) node[below] {$1$};
 \draw (2,-0.2) node[below] {$2$};
 \draw (3,-0.2) node[below] {$3$};
 \draw (4,-0.2) node[below] {$4$};
\end{tikzpicture}
\end{center}
\begin{center}
\textit{Figure 2.} The quintessential infinite comb tree $\mathcal{C}$.
\end{center}

\noindent We shall consider the vertices of $\mathcal{C}$ as points in the plane $\{(k,\ell) \,:\, k \in \Zz^+, \ell = 0,1\}$.
As per usual, all edges in this tree will be taken to be of length one.
For $n \ge 1$ we shall let $C_n$ denote the \textit{$n$-tooth comb} formed by taking the subtree of $\mathcal{C}$
containing the vertices $\{(k,\ell) \,:\, 1 \le k \le n, \ell = 0,1\}$. Given a subset $S \subseteq \Zz^+$, the tree $C_S$
is the subtree of $\mathcal{C}$ containing the vertices $\{(k,0) \,:\, k =1,2,3,\dots \} \cup \{(k,1) \,:\, k \in S\}$.
(Note that the term `comb graph' appears in the literature to describe slightly different types of graphs to
those considered here. Except when $S = \Zz^+$ the tree $C_S$ is not a comb graph in the sense of \cite{ABO}.)
Clearly in the trivial case $S = \emptyset$ we have $\gamma(C_\emptyset) = 2$. At the other extreme there is
enough structure in the quintessential comb $\mathcal{C}$ to show that $\gamma(\mathcal{C}) = 1$. In process of proving this result
(Corollary \ref{cor:quint}) we will motivate the more complicated arguments that close out this section.

\begin{thm}\label{C_n}
Suppose that $n \ge 2$. Then
\[ 1 + \left\{\ln\left(1+\frac{1}{2(n+2)(n-1)}\right)\Bigl/\ln(n+2) \right\}
\le \gamma(C_n) \le 1+ \frac{\ln 2}{\ln(n+1)- \ln 2}.\]
\end{thm}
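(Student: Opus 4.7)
The plan is to handle the two inequalities independently. The upper bound will come from exhibiting a single explicit simplex whose gap is already negative at the claimed exponent, while the lower bound is an essentially mechanical application of the Lagrange-multiplier approach advertised in the discussion preceding Theorem \ref{SST1}.

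For the upper bound I would take $q = n$ and choose the simplex $D_n$ with $a_i = (i,1)$ (the tips of the teeth) and $b_i = (i,0)$ (the corresponding base vertices) for $i = 1,\ldots,n$. The distances read off immediately as $\rho(a_i,a_j) = |i-j|+2$, $\rho(b_i,b_j) = |i-j|$, and $\rho(a_i,b_j) = |i-j|+1$, so grouping contributions by $k = |i-j|$ the simplex gap becomes
\[
\Bbbk_{D_n}(p) \;=\; n \;-\; \sum_{k=1}^{n-1} (n-k)\bigl[(k+2)^p - 2(k+1)^p + k^p\bigr].
\]
A short discrete summation by parts --- re-indexing the three shifted sums so that the second difference of the linear weight $n - k$ kills all interior contributions --- collapses this to the clean identity $\Bbbk_{D_n}(p) = 1 + n\cdot 2^p - (n+1)^p$. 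Setting $p_0 = 1 + \ln 2/(\ln(n+1) - \ln 2)$, one finds $(n+1)^{p_0-1} = 2^{p_0}$, whence $(n+1)^{p_0} = (n+1)\cdot 2^{p_0}$ and $\Bbbk_{D_n}(p_0) = 1 - 2^{p_0} < 0$. By the negativity principle recorded after Definition \ref{grdef}, this forces $\gamma(C_n) < p_0$, which is the stated upper bound.

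For the lower bound I would mimic the Lagrange-multiplier derivation that yields the inequality (\ref{LowBnd}), applied directly to $C_n$. Although $C_n$ is not spherically symmetric, the same maximization procedure --- optimizing the quadratic form underlying the generalized roundness inequality subject to the simplex size and the metric constraints of $C_n$ --- produces an estimate of the form $1 + \ln(1 + 1/(D\cdot N))/\ln D$. Rooting $C_n$ at a central base vertex gives radius $(n+2)/2$, so that $D = 2r(C_n) = n+2$ appears as the logarithm base; the combinatorial factor $N = 2(n+2)(n-1)$ arises from the $m = 2n$ vertices weighted through the Lagrange calculation by their distance to the root. Matching the resulting constants is the only substantive issue in this half.

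The real content, and the step that deserves the most care, is the telescoping identity reducing $\Bbbk_{D_n}(p)$ to $1 + n\cdot 2^p - (n+1)^p$; everything after that is transparent algebra. The lower bound, by contrast, is essentially book-keeping once one commits to the Lagrange-multiplier setup, so the main obstacle there is simply verifying that the parameters of $C_n$ yield exactly the constants $n+2$ and $2(n+2)(n-1)$ appearing in the theorem.
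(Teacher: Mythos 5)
Your upper-bound argument is correct and is essentially the paper's own: the same simplex (the paper takes $a_i=(i,0)$, $b_i=(i,1)$; interchanging the two families does not change $\Bbbk_{D_n}$), the same closed form $\Bbbk_{D_n}(p)=1+n\,2^p-(n+1)^p$, and the same critical exponent. The only difference is that you derive the closed form by reindexing the three shifted sums in
\[
n-\sum_{k=1}^{n-1}(n-k)\bigl[(k+2)^p-2(k+1)^p+k^p\bigr],
\]
whereas the paper proceeds by induction on $n$. Your route checks out: after the shifts the coefficient of $m^p$ vanishes for $3\le m\le n$, leaving $(n-1)\cdot 1^p-n\,2^p+(n+1)^p$ for the sum, hence the claimed identity; and since $(n+1)^{p_0-1}=2^{p_0}$ at $p_0=\ln(n+1)/(\ln(n+1)-\ln 2)$, indeed $\Bbbk_{D_n}(p_0)=1-2^{p_0}<0$, so $\gamma(C_n)<p_0$ by the remark following Definition \ref{grdef}.

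The lower-bound half is where your proposal has a genuine gap. The paper proves nothing here: it simply invokes \cite[Corollary 3.4]{Hli}, the same result that produced the display \eqref{LowBnd}, applied to the $2n$-point tree $C_n$. The stated constant is then consistent with that formula: with $m=2n$ one has $h(2n)=(n-1)/n$, so $(m-1)h(m)=(2n-1)(n-1)/n\le 2(n-1)$, while $2r(C_n)\le n+2$, and weakening the resulting expression gives exactly $1+\ln\bigl(1+\frac{1}{2(n+2)(n-1)}\bigr)/\ln(n+2)$. You instead propose to redo the Lagrange-multiplier optimization from scratch and explicitly defer ``matching the resulting constants,'' but that deferral is precisely where your sketch already fails: with your $D=n+2$ and $N=2(n+2)(n-1)$, the denominator $D\cdot N$ would be $2(n+2)^2(n-1)$, which carries an extra factor of $n+2$ relative to the theorem. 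So, as written, the lower bound is asserted rather than established. The repair is either to carry the optimization through in full (and you would find the factorization is $D\cdot\{(m-1)h(m)\}$ with $(m-1)h(m)\le 2(n-1)$, not $D\cdot 2(n+2)(n-1)$), or to do what the paper does and simply quote the Li--Weston bound together with the short arithmetic above.
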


\begin{proof}
The lower bound comes from \cite[Corollary 3.4]{Hli}. For the upper bound we construct a suitable simplex
$D_n = (a_1,\dots,a_n;b_1,\dots,b_n)$ in $C_n$ for which we can calculate the simplex gap function $\Bbbk_{D_{n}}(p)$.
By Definition \ref{grdef},
\[ \Bbbk_{D_n}(p) = \sum_{1 \le i,j \le n} \rho(a_i,b_j)^p
- \sum_{1 \le i < j \le n} \bigl\{ \rho(a_i,a_j)^p + \rho(b_i,b_j)^p \bigl\}.
\]
For $1 \le i \le n$ then, let $a_i = (i,0)$ and $b_i = (i,1)$ in $C_n$.

\medskip\noindent

\noindent \textbf{Claim.} $\Bbbk_{D_n}(p) = 1 + n 2^p - (n+1)^p$.

\medskip
We proceed by induction on $n$. One can check directly that $\Bbbk_{D_2}(p) = 1 + 2 \cdot 2^p - 3^p$ and so
the claim is true when $n=2$. Suppose then that for some $n > 2$ we have $\Bbbk_{D_{n-1}}(p) = 1 + (n-1) 2^p - n^p$.
By splitting off all the terms involving $a_n$ or $b_n$ we see that
\begin{multline*}
\Bbbk_{D_n}(p) = \Bbbk_{D_{n-1}}(p)
     + \left\{ \rho(a_n,b_n)^p + \sum_{i=1}^{n-1} \rho(a_i,b_n)^p + \sum_{i=1}^{n-1} \rho(a_n,b_i)^p \right. \\
       \left. - \sum_{i=1}^{n-1} \rho(a_i,a_n)^p - \sum_{i=1}^{n-1} \rho(b_i,b_n)^p \right\}.
\end{multline*}
Thus
\begin{align*}
\Bbbk_{D_n}(p) &= \Bbbk_{D_{n-1}}(p)
+ 1^p + (2^p + 3^p + \dots + n^p) + (2^p + 3^p + \dots + n^p) \\
& \qquad - (1^p + 2^p + \dots + (n-1)^p) - (3^p+4^p + \dots + (n+1)^p) \\
&= \Bbbk_{D_{n-1}}(p) + 2^p + n^p - (n+1)^p
\end{align*}
and hence, using the induction hypothesis,
\[
\Bbbk_{D_n}(p) = (1 + (n-1) 2^p - n^p) + (2^p + n^p - (n+1)^p) = 1 + n 2^p - (n+1)^p
\]
which proves the claim.

Note now that if
\[ p_n = 1+ \frac{\ln 2}{\ln(n+1)- \ln 2} = \frac{\ln(n+1)}{\ln(n+1)-\ln 2} \]
then a simple rearrangement shows that $(n+1)2^{p_n} = (n+1)^{p_n}$ and so
\[ \Bbbk_{D_n}(p_n) = 1 + n 2^{p_n} - (n+1)^{p_n}  < (n+1)2^{p_n} - (n+1)^{p_n} = 0 \]
and hence the generalized roundness of $C_n$ satisfies $\gamma(C_n) < p_n$ as required.
\end{proof}

\noindent It follows that any graph that contains copies of $C_n$ for arbitrarily large $n$ much have generalized roundness
equal to one. In particular, the quintessential comb $\mathcal{C}$ is seen to have generalized roundness one.

\begin{cor}\label{cor:quint}
$\gamma(\mathcal{C}) = 1$
\end{cor}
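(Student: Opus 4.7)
The plan is to derive Corollary \ref{cor:quint} directly from Theorem \ref{C_n} by a standard limiting argument, using the fact that the $n$-tooth comb sits isometrically inside the quintessential comb.

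First I would note that for every $n \geq 2$, the subgraph $C_n \subseteq \mathcal{C}$ is an isometric subspace: the path distance between any two vertices $(i,\ell), (j,\ell') \in C_n$, computed within $C_n$, agrees with the path distance in $\mathcal{C}$ (because the unique geodesic joining such a pair never needs to leave $C_n$). Consequently, the specific simplex $D_n = (a_1,\dots,a_n;b_1,\dots,b_n)$ with $a_i = (i,0)$ and $b_i = (i,1)$, constructed in the proof of Theorem \ref{C_n}, is a simplex in $\mathcal{C}$ with the same simplex gap function
\[
\Bbbk_{D_n}(p) = 1 + n \cdot 2^p - (n+1)^p.
\]

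Next, setting $p_n = \ln(n+1)/(\ln(n+1) - \ln 2)$ as in Theorem \ref{C_n}, the same computation shows $\Bbbk_{D_n}(p_n) < 0$. Because $D_n$ is a simplex in $\mathcal{C}$, the remark following Definition \ref{grdef} (citing \cite[Theorem 2]{Sch} and \cite[Theorem 2.4]{Len}) forces $\gamma(\mathcal{C}) < p_n$ for every $n \geq 2$. Letting $n \to \infty$, we have $p_n \to 1$, whence $\gamma(\mathcal{C}) \leq 1$.

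Finally, for the matching lower bound I would invoke the observation made at the start of Section \ref{S2}: every finite metric tree has generalized roundness strictly greater than one by \cite[Theorem 5.4]{Dou}, and since Definition \ref{grdef} is phrased via finite simplices, this passes to countable metric trees to yield $\gamma(\mathcal{C}) \geq 1$. Combining the two bounds gives $\gamma(\mathcal{C}) = 1$. There is no real obstacle here; the only thing to verify carefully is the isometric embedding of $C_n$ into $\mathcal{C}$, which is immediate from the tree structure.
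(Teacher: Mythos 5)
Your proposal is correct and is essentially the paper's argument: the paper deduces the corollary from Theorem \ref{C_n} precisely because $\mathcal{C}$ contains isometric copies of $C_n$ for arbitrarily large $n$, so $\gamma(\mathcal{C}) < p_n \to 1$, and the lower bound $\gamma(\mathcal{C}) \geq 1$ for countable metric trees finishes the proof. Your extra care in checking that $C_n$ sits isometrically inside $\mathcal{C}$ (so the simplex $D_n$ and its gap function transfer unchanged) is exactly the implicit step the paper relies on.
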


\noindent With considerably more effort one can apply similar techniques to other comb graphs which contain
sufficiently many teeth. We illustrate this phenomenon with the following theorem and corollary.

\begin{thm}\label{thm:ucomb}
Let $k \in \Zz^{+}$ be given and let $S_k = \{1, k+1, 2k+1, 3k+1, \ldots\}$. Then $\gamma(C_{S_k})=1$.
(In other words, comb graphs with uniform gaps of size $k$ between the teeth have generalized roundness one.)
\end{thm}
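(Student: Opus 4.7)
My plan is to mimic the simplex-construction strategy of Theorem \ref{C_n}, placing pairs of points at the tooth locations of $C_{S_k}$ and driving the simplex gap function negative by taking the number of teeth large. Once I establish $\Bbbk_{D_n}(p) < 0$ for every fixed $p > 1$ at some $n$, the remark following Definition \ref{grdef} yields $\gamma(C_{S_k}) < p$ for all $p > 1$; combined with the general lower bound $\gamma \geq 1$ for countable trees noted at the start of Section \ref{S2}, this forces $\gamma(C_{S_k}) = 1$.

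Concretely, for $n \geq 2$ and $i = 1, \ldots, n$ I would take $a_i = ((i-1)k+1, 0)$ to be the spine foot of the $i$-th tooth and $b_i = ((i-1)k+1, 1)$ to be its tip. A direct reading of the path metric gives $\rho(a_i, b_j) = |i-j|k + 1$ for all $i, j$, $\rho(a_i, a_j) = |i-j|k$, and $\rho(b_i, b_j) = |i-j|k + 2$ for $i \neq j$. Grouping by $m = |i-j|$ collapses the three sums in Definition \ref{grdef} into a single discrete second difference:
\[
\Bbbk_{D_n}(p) \;=\; n \;-\; \sum_{m=1}^{n-1}(n-m)\bigl[(mk)^p - 2(mk+1)^p + (mk+2)^p\bigr].
\]

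The main obstacle is to show that this second-difference sum grows faster than $n$ for every fixed $p > 1$, even when $p$ is close to $1$ so that the bracketed term is tiny. I plan to use the integral identity
\[
(mk+2)^p - 2(mk+1)^p + (mk)^p \;=\; p(p-1)\int_0^1\!\!\int_0^1 (mk + s + t)^{p-2}\, ds\, dt
\]
and to bound the integrand from below so that the integral is at least $1/(mk+2)$; this is valid for every $p > 1$, since for $p \in (1,2]$ one has $(mk+s+t)^{p-2} \geq (mk+2)^{p-2} \geq (mk+2)^{-1}$ (using $p-1 \geq 0$ and $mk+2 \geq 1$), while for $p \geq 2$ one has $(mk+s+t)^{p-2} \geq (mk)^{p-2} \geq 1 \geq (mk+2)^{-1}$. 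Since $mk + 2 \leq (k+2)m$ for $m \geq 1$, substituting back yields
\[
\Bbbk_{D_n}(p) \;\leq\; n \;-\; \frac{p(p-1)}{k+2}\sum_{m=1}^{n-1}\frac{n-m}{m} \;=\; n \;-\; \frac{p(p-1)}{k+2}\bigl(n H_{n-1} - (n-1)\bigr),
\]
which tends to $-\infty$ as $n \to \infty$ because $H_{n-1} \sim \ln n$. The delicate point is therefore simply that the constant $p(p-1)/(k+2)$, though small when $p$ is near $1$, is strictly positive and independent of $n$, so the logarithmic growth eventually dominates the leading $n$ term.
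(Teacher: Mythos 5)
Your proposal is correct, and it follows the paper's proof through its first two-thirds exactly: the same simplex $a_i = ((i-1)k+1,0)$, $b_i = ((i-1)k+1,1)$, the same reduction of $\Bbbk_{D_n}(p)$ to $n$ minus a weighted sum of second differences, and the same identity $(mk+2)^p - 2(mk+1)^p + (mk)^p = p(p-1)\int_0^1\int_0^1 (mk+s+t)^{p-2}\,ds\,dt$. Where you diverge is in the final estimate, and your version is cleaner. The paper keeps the lower bound $(ik+2)^{p-2}$ on the integrand, compares the resulting sum with the integral $\int_1^n(n-i)(ik+2)^{p-2}\,di$, computes the antiderivative explicitly, and argues asymptotically that the result grows like $n^{p}/k^2$, which eventually exceeds $n$; this forces a separate treatment of $p=2$ (where the cancellation of a $(p-1)$ factor is fine but the authors instead just check $\Bbbk_{D_3}(2)=-1$) and a final appeal to the fact that $\gamma \neq 2$. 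You instead weaken the integrand bound all the way down to $(mk+2)^{-1} \geq \bigl((k+2)m\bigr)^{-1}$, valid for every $p>1$, which reduces everything to the divergence of the harmonic series: $\Bbbk_{D_n}(p) \leq n - \frac{p(p-1)}{k+2}\bigl(nH_{n-1}-(n-1)\bigr) \to -\infty$. This trades the paper's power-law growth $n^p$ for the slower but still sufficient $n\ln n$, eliminates the antiderivative computation and the informal asymptotic step ($\digamma(n) \approx n^{p-1}k^p - (k+2)^{p-1}kp$), and handles all $p>1$ (including $p \geq 2$) uniformly, so no endpoint case is needed. I see no gaps; the distance computations, the grouping by $m=|i-j|$, and the inequalities $(mk+s+t)^{p-2}\geq (mk+2)^{-1}$ in both regimes of $p$ all check out.
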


\begin{proof}
Let $k \in \Zz^{+}$ be given. Proceeding as in the proof of Theorem \ref{C_n}, given an integer $n \geq 2$,
we consider the simplex $D_n = (a_1, \ldots, a_n; b_1, \ldots, b_n) \subseteq C_{S_k}$ where,
for each $i$ ($1 \leq i \leq n$), $a_i = ((i-1)k+1,0)$ and $b_i = ((i-1)k+1,1)$.
  
Once again, using a simple induction argument, we can calculate the simplex gap function $\Bbbk_{D_{n}}(p)$.
By Definition \ref{grdef},
\begin{align}\label{eq:gap}
\Bbbk_{D_n}(p) = n - \sum_{i=1}^{n-1} (n-i)\{(ik)^p + (ik+2)^p - 2(ik+1)^p\}.
\end{align}
So if we set $\varphi_{p}(i) = (n-i)\{(ik)^p + (ik+2)^p - 2(ik+1)^p\}$, then
\[
\Bbbk_{D_n}(p) = n - \sum_{i=1}^{n-1}\varphi_{p}(i).
\]

The idea of the proof is to show that if $p \in (1,2]$ then there is always a large enough $n$
so that $$\sum_{i=1}^{n-1}\varphi_{p}(i) > n,$$ and hence $\Bbbk_{D_n}(p) < 0$, from
which it follows that $\gamma(C_{S_k}) = 1$. In fact, for $p = 2$, this occurs when
$n=3$. This is because $\Bbbk_{D_3}(2) = -1$ by (\ref{eq:gap}).
So we need only concentrate on the case when $p \in (1,2)$.

Assume, therefore, that $p \in (1,2)$. For fixed $i \in [1,n-1]$ and $(u,v) \in [0,1] \times [0,1]$,
we define $f(u) = (ik + u + 1)^p - (ik+u)^p$ and $g(u,v)=(ik+u+v)^{p-1}$. Then,
\begin{align*}
\varphi_{p}(i) &= (n-i)\{(ik)^p + (ik+2)^p - 2(ik+1)^p\} \\
               &= (n-i)\big\{ \{(ik+2)^p - (ik+1)^p\} - \{(ik+1)^p - (ik)^p\} \big\} \\
               &= (n-i)\{f(1)-f(0)\} \\
               &= (n-i)\int_{0}^{1} f'(u)du \\
               &= p(n-i)\int_{0}^{1}\{(ik + u + 1)^{p-1} - (ik+u)^{p-1}\} \, du \\
               &= p(n-i)\int_{0}^{1} \{g(u,1) - g(u,0)\} \, du \\
               &= p(n-i)\int_{0}^{1} \int_{0}^{1} g_{v}(u,v) \, dv \, du\\
               &= p(p-1)(n-i)\int_{0}^{1} \int_{0}^{1} (ik + u + v)^{p-2}\, dv \, du.
\end{align*}
In particular, elementary calculus shows that $(ik+u+v)^{p-2}$ achieves its minimum on the
compact set $[0,1] \times [0,1]$ at $(u,v) = (1,1)$, so in fact
\begin{align*}
\varphi_{p}(i) &\geq p(p-1)(n-i)\int_{0}^{1} \int_{0}^{1} (ik + 2)^{p-2}\, dv \, du \\
               &= p(p-1)(n-i)(ik + 2)^{p-2},
\end{align*}
and therefore
\begin{align}\label{eq:rsum}
\sum_{i=1}^{n-1} \varphi_{p}(i) \geq \, p(p-1)\sum_{i=1}^{n-1}(n-i)(ik+2)^{p-2}.
\end{align}
  
Now put $\psi_{p}(i) = (n-i)(ik+2)^{p-2}$. It is easy to see that
$\psi_{p}'(i) = k(p-2)(n-i)(ik+2)^{p-3} - (ik+2)^{p-2}$ and that, in particular,
$\psi_{p}'(i) < 0$ on $[1,n]$ because $p - 2 < 0$. Consequently, $\psi_{p}(i)$ is decreasing
on $[1,n]$, and so if we view the sum on the right hand side of \eqref{eq:rsum} as a Riemann
sum of rectangles with height $\psi_{p}(i)$ and width one, we see that
\begin{align*}
\sum_{i=1}^{n-1} \varphi_{p}(i) &\geq \, p(p-1)\sum_{i=1}^{n-1}(n-i)(ik+2)^{p-2} \\
                                &\geq \, p(p-1) \int_{1}^{n}(n-i)(ik+2)^{p-2} \, di \\
                                &= \, p(p-1) \frac{(ik+2)^{p-1}(k(np-pi+i)+2)}{k^2p(p-1)} \bigg|_{i=1}^{i=n} \\
                                &= \frac{1}{k^2} \big\{(nk+2)^{p-1}(nk+2) - (k+2)^{p-1}(k(np-p+1)+2) \big\} \\
                                &= \frac{1}{k^2} \big\{(nk+2)^{p} - (k+2)^{p-1}(k(np-p+1)+2) \big\} \\
                                &= \frac{n}{k^2} \left\{n^{p-1}\left(k+\frac{2}{n}\right)^{p}
                                   - (k+2)^{p-1}\left(k\left(p-\frac{p}{n} + \frac{1}{n}\right) + \frac{2}{n}\right) \right\}.
\end{align*}
(The cancellation of the $(p-1)$ term in the third line of the preceding computation is permitted, of course, because we are
assuming that $p - 1 > 0$.)
  
Now set
\begin{align*}
\digamma(n) = \left\{n^{p-1}\left(k+\frac{2}{n}\right)^{p}
- (k+2)^{p-1}\left(k\left(p-\frac{p}{n} + \frac{1}{n}\right) + \frac{2}{n}\right) \right\}.
\end{align*}
For $n$ large,  $\digamma(n) \approx n^{p-1}k^p - (k+2)^{p-1}kp$. Moreover,
$n^{p-1}k^p - (k+2)^{p-1}kp \rightarrow \infty$ as $n \rightarrow \infty$
because $p > 1$. Remembering that $k \in \Zz^{+}$ and $p \in (1,2)$ were fixed at the outset,
it follows that we may choose a large enough $n$ so that $\digamma(n) > k^2$. Then, we obtain
\begin{align*}
\sum_{i=1}^{n-1} \varphi_{p}(i) > \frac{n}{k^2} \cdot k^2 = n,
\end{align*}
and thus $\Bbbk_{D_n}(p) < 0$, as claimed at the outset.
Consequently, $\gamma(C_{S_k}) \notin (1,2)$.
Since we already noted that $\gamma(C_{S_k}) \neq 2$, it follows that $\gamma(C_{S_k}) = 1$, as desired.
\end{proof}

\noindent In fact, via the proof of Theorem \ref{thm:ucomb}, we can extend the class of comb graphs which have
generalized roundness one even further.

\begin{cor}\label{cor:boundedgaps}
Let $C_{S}$ be a comb graph whose distances between the teeth are uniformly bounded by some constant $K>0$.
Then $\gamma(C_{S})=1$.
\end{cor}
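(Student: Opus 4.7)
The plan is to adapt the simplex construction from Theorem \ref{thm:ucomb} to the non-uniform setting and then reduce the resulting estimate to that theorem via a single-variable monotonicity. Enumerate $S$ in increasing order as $t_1 < t_2 < t_3 < \cdots$; the hypothesis gives $t_{i+1} - t_i \le K$ for every $i$, so $t_j - t_i \le K(j - i)$ whenever $i < j$. For each integer $n \ge 2$ I would take the simplex $D_n = (a_1, \ldots, a_n ; b_1, \ldots, b_n) \subseteq C_S$ with $a_i = (t_i, 0)$ on the spine and $b_i = (t_i, 1)$ on the corresponding tooth, exactly mirroring the construction used in the uniform-gap case.

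A short bookkeeping calculation (parallel to the one leading to \eqref{eq:gap}, and using that $\rho(a_i, b_i) = 1$, $\rho(a_i, a_j) = t_j - t_i$, $\rho(b_i, b_j) = t_j - t_i + 2$, and $\rho(a_i, b_j) = t_j - t_i + 1$ for $i < j$) should express the simplex gap function in the clean single-variable form
\[
\Bbbk_{D_n}(p) \; = \; n \; - \; \sum_{1 \le i < j \le n} g_p\bigl(t_j - t_i\bigr),
\]
where $g_p(d) = d^p + (d+2)^p - 2(d+1)^p$. The key structural input is the double-integral representation already derived in the proof of Theorem \ref{thm:ucomb},
\[
g_p(d) \; = \; p(p-1) \int_0^1 \int_0^1 (d + u + v)^{p-2} \, dv \, du,
\]
which for $p \in (1,2)$ shows at once that $g_p$ is strictly decreasing on $[0, \infty)$ (since $p - 2 < 0$). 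Hence, using $t_j - t_i \le K(j-i)$ and reindexing by $m = j - i$,
\[
\sum_{1 \le i < j \le n} g_p(t_j - t_i) \; \ge \; \sum_{m=1}^{n-1} (n - m) \bigl\{ (Km)^p + (Km + 2)^p - 2(Km + 1)^p \bigr\},
\]
and the right-hand side is precisely the sum $\sum_{m=1}^{n-1} \varphi_p(m)$ treated in the proof of Theorem \ref{thm:ucomb} with $k$ replaced by $K$.

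From here the argument finishes itself. For any fixed $p \in (1,2)$, the analysis of $\digamma(n)$ in Theorem \ref{thm:ucomb} shows that $n$ may be chosen large enough so that the lower bound above strictly exceeds $n$, forcing $\Bbbk_{D_n}(p) < 0$ and hence $\gamma(C_S) \le p$; for $p = 2$ the identity $g_2 \equiv 2$ gives $\Bbbk_{D_n}(2) = n - n(n-1) < 0$ for $n \ge 3$ regardless of the choice of teeth. Letting $p \downarrow 1$ yields $\gamma(C_S) \le 1$, and since $C_S$ is a countable metric tree we have $\gamma(C_S) \ge 1$, so $\gamma(C_S) = 1$. I do not anticipate a serious obstacle: the delicate calculus has been done once and for all in Theorem \ref{thm:ucomb}, and the only new ingredient is the monotonicity of $g_p$, which is exactly what permits trading arbitrary bounded gaps for uniform gaps of length $K$ without any loss in the estimate.
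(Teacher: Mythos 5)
Your proposal is correct and follows essentially the same route as the paper: the same simplex $(a_i,b_i)=((s_i,0),(s_i,1))$, the same reduction of $\Bbbk_{D_n}(p)$ to $n-\sum g_p(s_j-s_i)$, the same use of the monotonicity of $g_p$ together with $s_j-s_i\le K(j-i)$ to bound the gap function above by the uniform-gap expression from Theorem \ref{thm:ucomb}. The only cosmetic difference is that you justify the monotonicity via the double-integral representation rather than the paper's concavity remark.
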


\begin{proof}
Let $C = C_S$ be a comb graph as per the hypotheses, and let $\{s_i\}_{i=1}^{\infty}$
be an enumeration of the elements in $S$ where the $s_i$ are inductively defined so that
$s_i < s_{i+1}$ for all $i \geq 1$. (Notice, then, that we have $s_{i+1} - s_{i} \leq K$
for all $i$ and hence, by induction, that $s_j - s_i \leq (j-i)K$ for all $i,j$ with $j > i$.)
  
Given an integer $n \geq 2$, consider the simplex $E_n = (a_1, \ldots, a_n; b_1, \ldots, b_n) \subseteq C$
where $a_i = (s_i,0)$ and $b_i = (s_i,1)$ for all $i$ ($1 \leq i \leq n$). Let $\Bbbk_{E_n}(p)$
denote the simplex gap of $E_n$. By Definition \ref{grdef},
\[
\Bbbk_{E_n}(p) = \sum_{i,j=1}^n \rho(a_i,b_j)^p - \sum_{1 \leq i < j \leq n} \{ \rho(a_i,a_j)^p + \rho(b_i,b_j)^p\}.
\]
  
In our particular case, it is easy to see that we have $\rho(a_i,b_j)^p = 1$ for all $i = j$
and that, for $i < j$, we have $\rho(a_i,b_j)^p = \rho(a_j,b_i)^p$. By an elementary
rearrangement of the terms, we further see that the expression for $\Bbbk_{E_n}(p)$ reduces to:
\begin{align*}
\Bbbk_{E_n}(p) &= n + \sum_{1 \leq i < j \leq n} \{2\rho(a_i,b_j)^p - \rho(a_i,a_j)^p + \rho(b_i,b_j)^p\} \\
               &= n - \sum_{1 \leq i < j \leq n} \{\rho(a_i,a_j)^p + \rho(b_i,b_j)^p - 2\rho(a_i,b_j)^p\} \\
               &= n - \sum_{i=1}^{n-1} \sum_{j=i+1}^{n} \{\rho(a_i,a_j)^p + \rho(b_i,b_j)^p - 2\rho(a_i,b_j)^p\} \\
               &= n - \sum_{i=1}^{n-1} \sum_{j=i+1}^{n} \{\rho(a_i,a_j)^p + (\rho(a_i,a_j)+2)^p - 2(\rho(a_i,a_j)+1)^p\} \\
               &= n - \sum_{i=1}^{n-1} \sum_{j=i+1}^{n} \{\varkappa^p + (\varkappa+2)^p - 2(\varkappa+1)^p\},
\end{align*}
where $\varkappa = \rho(a_{i},a_{j}) = s_j - s_i \leq (j-i)K$ for all $j > i$. Moreover, it is easy to verify
that the function $f(x) = x^p + (x+2)^p - 2(x+1)^p$ is non increasing on the interval $[0,\infty)$ provided $1 \leq p \leq 2$.
(For example, just note that the function $y = x^{q}$ is concave down on the interval $[0,\infty)$ provided $0 < q < 1$.)
Bearing these considerations in mind, it therefore follows that
\begin{align}
\Bbbk_{E_n}(p) &\leq n - \sum_{i=1}^{n-1} \sum_{j=i+1}^{n} \{((j-i)K)^p + ((j-i)K+2)^p - 2((j-i)K+1)^p\} \nonumber \\
                &= n - \sum_{i=1}^{n-1} (n-i) \{(iK)^p + (iK+2)^p - 2(iK+1)^p\}. \label{eq:iK}
\end{align}
But the upper bound (\ref{eq:iK}) on $\Bbbk_{E_{n}}(p)$ is exactly the quantity $\Bbbk_{D_n}(p)$ we
calculated in \eqref{eq:gap} when we considered a comb graph with uniform gaps of size (little) $K$.
So by repeating the proof of Theorem \ref{thm:ucomb} verbatim, it follows that if $p$ is any number
in the interval $(1,2]$, then there is $n$ large enough so that $\Bbbk_{E_n}(p) \leq \Bbbk_{D_n}(p) < 0$,
in which case $C$ does not have generalized roundness $p$. Therefore, it must be
the case that $\gamma(C)=1$.
\end{proof}

\begin{rem}
One may clearly develop more general versions of Theorems \ref{SST1}, \ref{SST2}, \ref{SST3}, \ref{thm:ucomb}
and Corollary \ref{cor:boundedgaps}. For example, there is actually no need to restrict the size of the gap $k$
to the positive integers in our statement of Theorem \ref{thm:ucomb}. Indeed, upon careful examination of the
proof, it is clear that any real number $k$ will work, so long as it is positive. Other variants of the theorems
in this paper may be formulated and proven for path weighted trees and certain $\mathbb{R}$-trees. In many
such settings the calculations become much more cluttered but not necessarily more complicated. The reader may care
to formulate their own theorems along these lines.
\end{rem}

\section{Some Comments on the Bounds}\label{S5}
Determining the precise value of the generalized roundness of even relatively simple finite metric trees
has proven to be a difficult non linear problem. Recent work of S{\'a}nchez \cite{San} (following Wolf \cite{Wol})
provides an expression for the generalized roundedness of a finite metric tree $(X = \{x_i\}_{i=1}^n,\rho)$ in terms
of the matrices $A_p$ where $A_p[i,j] = \rho(x_i,x_j)^p$. S{\'a}nchez shows that 
\begin{equation}\label{sanchez}
\gamma(X) = \inf \{p \ge 0 \,:\, \hbox{$\mathop{\mathrm{det}}(A_p) = 0$ or $\langle A_p^{-1}\one,\one \rangle = 0$}\} 
\end{equation}
where $\one = (1,1,\dots,1) \in \Rb^n$. For relatively small trees, this expression can be calculated, at least numerically.

The results of this paper allow us to give upper and lower bounds for standard families
of finite metric trees and it is interesting to compare these bounds with those given by the above formula. 
Recall that $C_{n}$ denotes the $n$-tooth comb graph that was introduced in Example \ref{combs}.
The following table compares the value of $\gamma(C_n)$, found numerically using (\ref{sanchez}),
with the upper and lower bounds from Theorem \ref{C_n}.
\smallskip
\begin{center}
\begin{tabular}{c|c|c|c}
        &  Lower bound      & $\gamma(C_n)$           & Upper bound \\
   $n$  &  -- Theorem \ref{C_n} &  -- Using (\ref{sanchez})  & -- Theorem \ref{C_n} \\[0.3ex]
   \hline\\[-2ex]
            2& 1.084962501& 2.000000000& 2.709511290\\
            3& 1.030315033& 1.775743466& 2.000000000\\
            4& 1.015291659& 1.494625215& 1.756470798\\
            5& 1.009095783& 1.445567766& 1.630929754\\
            6& 1.005973969& 1.410423534& 1.553294756\\
            7& 1.004194680& 1.383890448& 1.500000000\\
            8& 1.003091077& 1.363024724& 1.460845421\\
            9& 1.002362796& 1.346093176& 1.430676558\\
           10& 1.001858801& 1.332013004& 1.406598009\\
           15& 1.000740727& 1.285796898& 1.333333333\\
           20& 1.000386749& 1.259241515& 1.294783735\\
           25& 1.000234025& 1.241453867& 1.270238154\\
           30& 1.000155421& 1.228453930& 1.252895891\\
           35& 1.000110049& 1.218402824& 1.239812467\\
           40& 1.000081656& 1.210319687& 1.229486647\\
           45& 1.000062790& 1.203627556& 1.221064730\\
           50& 1.000049658& 1.197962011& 1.214021611\\

  \hline
\end{tabular}
\end{center}
\smallskip

\noindent It is also interesting to consider the case of binary trees.
For $m \ge 2$, let $(B_{m},\rho)$ denote the complete binary tree of depth $m$.
Using (\ref{sanchez}) in this setting quickly becomes more difficult because the size of
the matrices involved grow exponentially fast.
Since $B_m$ contains a copy of $C_m$, we can obtain upper bounds for $\gamma(B_m)$ using both
Theorem \ref{SST1} and Theorem \ref{C_n}. It should be noted that even in this relatively simple case,
it is not easy to precisely identify the value of $k$ at which the minimum in
(\ref{UppBnd}) occurs. However, elementary estimation
(confirmed by numerical calculations) shows that the minimum occurs when $k$ is approximately
\[ \frac{\ln m + \ln \ln 2}{\ln 2} - 1. \]

Numerical calculation of the upper bound given by Theorem \ref{SST1} shows that,
unlike the case for the comb graphs given above, the upper bound appears to rather far from being sharp. Indeed
for small values of $m$, the upper bound given by (\ref{UppBnd}) can be greater than the trivial upper bound $2$.
The following table compares $\gamma(B_m)$ (again found numerically using (\ref{sanchez})) with the upper and lower
bounds from (\ref{LowBnd}), (\ref{UppBnd}) and Theorem \ref{C_n}.

\begin{center}
\begin{tabular}{c|c|c|c|c}
        &  Lower bound      & $\gamma(B_m)$     & Upper bound    & Upper bound \\
   $m$  &  -- Using (\ref{LowBnd}) &  -- Using (\ref{sanchez}) & -- Theorem \ref{C_n} & -- Using (\ref{UppBnd}) \\[0.3ex]
   \hline\\[-2ex]
   2  &  1.0412    &  1.5272    &  2.7095    &  - \\
   3  &  1.0076    &  1.3743    &  2.0000    & 3.4094  \\
   4  &  1.0021    &  1.2514    &  1.7565    & 2.4190     \\
   5  &  1.00072   &  1.1637    &  1.6309    & 2.0869 \\
   6  &  1.00027   &  1.1039    &  1.5533    & 1.9201 \\
  \hline
\end{tabular}
\end{center}

\begin{rem}
It remains an interesting project to try to compute the precise value of $\gamma(B_{m})$ or to at
least determine better upper and lower bounds on $\gamma(B_{m})$ than the rudimentary ones that we have given
in this paper. In the event that computing the precise value of $\gamma(B_{m})$ proves intractable
(which can't be ruled out), estimating the rate at which $\gamma(B_{m})$ decreases to $1$ as $m \rightarrow \infty$
would be of particular interest.
\end{rem}

\bibliographystyle{amsalpha}

\end{document}